\documentclass[12pt]{article}

\usepackage{amsmath,amssymb,amsfonts}
\usepackage{amsthm}%
\usepackage{mathrsfs}
\usepackage{enumerate}
\usepackage{graphicx}
\usepackage{stmaryrd}
\usepackage[OT1]{fontenc}

\allowdisplaybreaks%

{\theoremstyle{plain}%
 \newtheorem{theorem}{Theorem}

}
{\theoremstyle{remark}

}
{\theoremstyle{definition}

}

\begin{document}

\begin{center}
 {\large On two conjectures on triangulations of 2-manifolds}

 \ 

{\sc John M. Campbell}

\vspace{0.1in}

{\footnotesize Department of Mathematics and Statistics}

{\footnotesize Dalhousie University}

{\footnotesize Halifax, NS B3H 4R2}

{\footnotesize Canada}

\vspace{0.1in}

{\footnotesize {\tt jh241966@dal.ca}}

\vspace{0.1in}

\end{center}

\begin{abstract}
 For a closed, connected 
 2-manifold $M$, and for a triangulation $T$ of $M$, we write $V(T)$ in place of the vertex set associated with $T$. A 
 \emph{cyclic coloration} of a triangulation $T$ of $M$ refers to a face coloring of $T$ such that: For each $v \in V(T)$, the faces incident 
 to $v$ have distinct colors. Chen and Lawrencenko [\emph{Yokohama Math.\ J.}, 1999] conjectured that there exists a constant $C(M)$ 
 (depending only on $M$) such that $|V(T)| + C(M)$ colors suffice for there to exist a cyclic coloration of a triangulation $T$ of $M$. We 
 prove this conjecture, using a greedy algorithm related to the Euler--Poincar\'{e} formula for surface triangulations. Chen and 
 Lawrencenko also conjectured that: If $M$ is not the projective plane and $T$ is a triangulation of $M$ that is minimal with respect to 
 the number of vertices, then $\xi(T) = |V(T)| = V_{\min}(M)$, where $V_{\min}(M)$ denotes the minimum possible number of vertices 
 among all triangulations of the 2-manifold $M$, and where $\xi(T)$ denotes the minimal value $k$ such that $T$ admits a cyclic coloration 
 with $k$ colors. We disprove this latter conjecture via an explicit counterexample, using an 8-vertex triangulation of the Klein bottle with 
 16 faces. It appears that both of the Chen--Lawrencenko conjectures have remained open, prior to our work. 
\end{abstract}

\vspace{0.1in}

\noindent {\footnotesize \emph{MSC:} 57Q15, 05C15}

\vspace{0.1in}

\noindent {\footnotesize \emph{Keywords:} 2-manifold, Klein bottle, Euler--Poincar\'{e} characteristic, 
 triangulation, face coloring, cyclic chromatic number}

\section{Introduction}
 A main area of study in combinatorial topology is given by triangulating manifolds. Landmark results in this area include Milnor's 1961 
 disproof of the polyhedral Hauptvermutung for finite simplicial complexes \cite{Milnor1961}, and a major twenty-first century 
 development is given by Manolescu's disproof of the high-dimensional Triangulation Conjecture, which, in conjunction with prior work 
 by Galewski--Stern \cite{GalewskiStern1980} and Matumoto \cite{Matumoto1978}, showed the existence of nontriangulable topological 
 manifolds in every dimension at least $5$ \cite{Manolescu2016}. The purpose of this paper is to solve two open problems that concern 
 manifold triangulations and that were introduced in 1999 by Chen and Lawrencenko~\cite{ChenLawrencenko1999}. It appears that these 
 two conjectures have remained open, prior to our paper. 

 As in the work of Chen and Lawrencenko~\cite{ChenLawrencenko1999}, the manifolds considered in this paper are connected, closed 
 2-manifolds. For a manifold $M$ of this form, and for a triangulation $T$ on $M$, we write $V(T)$ in place of the vertex set of $T$. 
 According to Chen and Lawrencenko, a face coloring is referred to as a \emph{cyclic coloration} if: For each $v \in V(T)$, the faces 
 incident to $v$ are colored differently. Chen and Lawrencenko's cited work is largely based on two conjectures that they introduced and 
 that are reproduced below. 

 Again letting $M$ be a closed, connected 2-manifold, Chen and Lawrencenko~\cite{ChenLawrencenko1999} conjectured that there 
 exists a constant $C(M)$, depending only on $M$, whereby $|V(T)| + C(M)$ colors are sufficient for there to be a cyclic coloration for an 
 arbitrary triangulation $T$ of $M$. This is given as Conjecture 2 in Chen and Lawrencenko's paper. We refer to this conjecture as 
 \emph{Chen and Lawrencenko's second conjecture}. 
 
 Being consistent with Chen and Lawrencenko's notation \cite{ChenLawrencenko1999}, we write $V_{\min}(M)$ in place of the minimum 
 number of vertices possible for a triangulation of $M$. Borrowing from Chen and Lawrencenko's terminology, a triangulation $T$ of $M$ 
 is said to be a \emph{minimal 
 triangulation} if $|V(T)| = V_{\min}(M)$, and the \emph{cyclic chromatic number} $\xi(T)$ is the least number of colors sufficient for there 
 to be a cyclic coloration of $T$. Chen and Lawrencenko conjectured that: 
 If $M$ is not the projective plane, 
 then $\xi(T) = |V(T)| = V_{\min}(M)$. 
 This is given as Conjecture 1 in their paper, and we refer to this conjecture as \emph{Chen and Lawrencenko's first conjecture}. 
 We succeed, based on our extensive interactions with GPT-5.6 Pro, in proving and disproving, respectively, 
 Chen and Lawrencenko's second and first conjectures. 
 
 Cyclic colorations of a triangulation $T$ are precisely, under the canonical bijection such that $F(T) \cong V(T^{\ast})$, cyclic vertex 
 colorings of the cellular dual $T^{\ast}$. A result from the work of Enomoto and Hor\v n\'ak \cite{EnomotoHornak2009} on cyclic vertex 
 colorings implies (in an equivalent way) that $\xi(T) \leq |V(T)| + 4$ specifically for a spherical triangulation $T$, 
 whereas Conjecture 2 from Chen and Lawrencenko's paper \cite{ChenLawrencenko1999} 
 concerns (as above) bounding $\xi(T)$ for triangulations on \emph{arbitrary} connected, closed 2-manifolds. 
 This illustrates the interest in our techniques and results in relation to extant work on cyclic vertex colorings. 

\section{Preliminaries}\label{secPrelim}
 While we assume some basic familiarity with manifolds, fundamental polygons, triangulating manifolds, etc., 
 the below preliminaries are necessary for our purposes. 

\subsection{Basics}
 A \emph{finite abstract simplicial complex} is a tuple 
 $T = (V(T), \mathcal{S}(T))$, 
 for a finite set $V = V(T)$ referred to as its \emph{vertex set}, 
 and for a collection $\mathcal{S} = \mathcal{S}(T)$ 
 of finite subsets of $V$, referred to as the \emph{simplices} of $V$, 
 such that: (a)
 For each $v \in V$, the set $\{ v \}$ is in $\mathcal{S}$; and 
 (b) If $\sigma \in \mathcal{S}$ and $\tau \subseteq \sigma$, then $\tau \in \mathcal{S}$. 
 For $\sigma \in \mathcal{S}(T)$, if $|\sigma| = k + 1$, then $\sigma$ is referred to as a \emph{$k$-simplex}. 
 It is common to abuse notation by letting it be understood that an element in $T$ refers to an element in $\mathcal{S}$. 
 For a simplex $\sigma \in T$, this element is said to have 
 dimension equal to $\dim \sigma = |\sigma| - 1$. 
 The 1-simplices are referred to as \emph{edges}, and the 2-simplices are referred to as (triangular) \emph{faces}. 

 Again let $T = (V(T), \mathcal{S}(T))$ be a finite abstract simplicial complex. A simplex $\sigma \in \mathcal{S}(T)$ is said to be a 
 \emph{facet} of $T$ if it is maximal with respect to inclusion among the simplices of $T$, i.e., 
 so that $\sigma \subseteq \tau \in \mathcal{S}(T) \Longrightarrow \tau = \sigma$. 
 Observe that a finite abstract simplicial complex is entirely/uniquely determined by its vertex set and its sets of facets. 
 The simplicial complex $T$ is said to be a \emph{pure 2-dimensional simplicial complex}
 if every facet of $T$ is a $2$-simplex. 

 The \emph{edge set} $E(T)$ of $T$ is the set of all 1-simplices of $T$. The \emph{face set} $F(T)$ of $T$ is the set of all 2-simplices
 of $T$. 
 For the case whereby the finite simplicial complex $T$ is of dimension at most $2$, define 
\begin{equation}\label{EulerPoincare}
 \chi(T) = |V(T)| - |E(T)| + |F(T)|. 
\end{equation}
 If $T$ is a triangulation of $M$ (a 2-manifold), 
 then we write $\chi(T) = \chi(M)$. This is referred to as the \emph{Euler--Poincar\'{e} characteristic} of $M$. 

 Again for a finite abstract simplicial complex $T = (V(T), \mathcal{S}(T))$ and for $v \in V(T)$, the \emph{link} of $v$ in $T$
 refers to the subcomplex
\begin{equation}\label{definelk} 
 \operatorname{lk}_{T}(v) := \{ \sigma \in T : v \not\in \sigma \ \text{and} \ \sigma \cup \{ v \} \in T \}. 
\end{equation}
 The \emph{1-skeleton} of $T$ is denoted with $T^{(1)}$
 and may be defined so that 
\begin{equation}\label{defineskeleton}
 T^{(1)} := \{ \sigma \in T : \dim \sigma \leq 1 \}. 
\end{equation}

\subsection{Geometric realizations}
 Again let $T = \big( V(T), \mathcal{S}(T) \big)$ be a finite abstract simplicial complex. 
 We regard $\mathbb{R}^{V(T)}$ as the real vector space consisting of functions $x\colon V(T) \to \mathbb{R}$.
 For $v \in V(T)$, we write $e_{v} \in \mathbb{R}^{V(T)}$
 in place of the indicator function such that $e_{v}(w) = 1$ if $w = v$ and such that 
 $e_{v}(w)$ vanishes otherwise. For $\sigma \in \mathcal{S}(T)$, its \emph{geometric realization} is 
 $$ \left| \sigma \right| 
 = \left\{ \sum_{v \in \sigma} \lambda_{v} e_{v} : \lambda_{v} \geq 0 \ \text{for every} \ v \in \sigma, 
 \sum_{v \in \sigma} \lambda_{v} = 1 \right\}. $$
 Similarly, the \emph{geometric realization} of $T$ is the topological space
 $$ |T| := \bigcup_{\substack{\sigma \in \mathcal{S}(T) \\ \sigma \neq \varnothing }} 
 \left| \sigma \right| \subseteq \mathbb{R}^{V(T)} $$
 endowed with the subspace topology
 inherited from $\mathbb{R}^{V(T)}$ as a Euclidean space. 

\subsection{Related notions}
 A \emph{simplicial circle} is a 1-dimensional and finite abstract simplicial complex $C$ such that the geometric
 realization of $C$ is homeomorphic to $S^1$. Equivalently, the simplicial complex $C$ is isomorphic to the 
 boundary complex of a polygon. 

 By again letting $M$ denote a connected, closed 2-manifold, a 
 \emph{triangulation} $T$ of $M$
 is understood, for the purposes of this paper, as a finite simplicial triangulation of $M$, i.e., 
 so that $T$
 is a finite simplicial complex such that the geometric realization of $T$ is homeomorphic to $M$. 
 It should be emphasized that generalized and ideal triangulations are not considered in this paper. 
 For a finite abstract simplicial complex $D = (V(D), \mathcal{S}(D))$, 
 this is said to be a \emph{simplicial $2$-disk} if $D$ is a pure $2$-dimensional simplicial complex whose geometric realization is 
 homeomorphic to the closed unit disk 
 $\mathbb{D}^{2} = \{ x \in \mathbb{R}^{2} : \| x \| \leq 1 \}$. 

\section{Proof of Chen and Lawrencenko's second conjecture}\label{secproof}
 Our proof of the second Chen--Lawrencenko conjecture \cite[Conjecture 2]{ChenLawrencenko1999} provides a 
 stronger result, in the sense that we show how the required constant $C(M)$ 
 can be chosen to depend only on the Euler--Poincar\'{e} characteristic $\chi(M)$. 

\begin{theorem}\label{firstsolution}
 For every connected, closed 2-manifold $M$, 
 there exists a constant $C(M)$ such that: For every finite simplicial triangulation $T$ of $M$, 
 the relation 
\begin{equation}\label{equivalentconj}
 \xi(T) \leq \left| V(T) \right| + C(M) 
\end{equation}
 holds. Explicitly, by letting 
 \begin{equation}\label{defineKM}
 K_{M} := 18 \big( 1 + |\chi(M)| \big), 
\end{equation}
 one may then choose $C(M)$ as 
\begin{equation}\label{defineCM} 
 C(M) := K_{M}^{2} + 3 K_{M}. 
\end{equation} 
\end{theorem}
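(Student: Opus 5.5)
The plan is a two-phase greedy colouring of $F(T)$, using the Euler--Poincar\'{e} relation (\ref{EulerPoincare}) to control the high-degree vertices. Write $n = |V(T)|$, $\chi = \chi(M)$, $K = K_M$. Since every edge lies in exactly two faces, $2|E(T)| = 3|F(T)|$, and (\ref{EulerPoincare}) gives $|E(T)| = 3(n-\chi)$. Hence
\[
\sum_{v} \deg(v) = 6(n-\chi).
\]
Moreover, $\operatorname{lk}_T(v)$ is a simplicial circle on distinct vertices other than $v$. So the number of faces at $v$ equals $\deg(v)$, and $\deg(v) \le n-1$. Call $v$ \emph{big} if $\deg(v) > K$, and let $B$ be the set of big vertices. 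Then $K|B| < 6(n-\chi)$, so $|B| \le 6(n+|\chi|)/K$.

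\textbf{Phase 1.} First I colour the faces having at least two big vertices, giving each of them its own colour. Each such face contains an edge with both ends in $B$, and each edge lies in two faces. The graph spanned by the edges inside $B$ is a subgraph of $T^{(1)}$ embedded in $M$. By the usual Euler-characteristic bound it has at most $3|B| - 3\chi + c$ edges for a small absolute constant $c$; if needed, the degenerate small cases are absorbed into the constant, or one bounds the count crudely by $\binom{|B|}{2}$ when $|B|$ is small. So the number of Phase-1 faces is at most
\[
2\bigl(3|B| + 3|\chi| + c\bigr) \le \frac{36(n+|\chi|)}{K} + 6|\chi| + 2c .
\]
Since $K = 18(1+|\chi|) \ge 36$, this is at most $n + O(|\chi|)$, which lies well within $n + K^2 + 3K$ colours.

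\textbf{Phase 2.} Next I greedily colour the remaining faces, each of which has at most one big vertex. Let $f = \{a,b,c\}$ be such a face, with $b$ and $c$ not big. Any face conflicting with $f$ contains $a$, $b$ or $c$. At most $\deg(a) - 1 \le n-2$ of these contain $a$, and at most $2K$ contain $b$ or $c$. So $f$ has fewer than $n + 2K$ conflicting faces, whether or not they are already coloured, including the Phase-1 ones. Hence a palette of $n + 2K$ colours always leaves a free colour for $f$. The Phase-1 colours are drawn from the same palette, and the only requirement there is that they be distinct, so no clash arises. The total palette is therefore
\[
\max\bigl(n + 2K,\ \#\text{Phase-1 faces}\bigr) \le n + K^2 + 3K,
\]
which proves (\ref{equivalentconj}) with the constant (\ref{defineCM}).

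\textbf{Main obstacle.} The delicate step is the Phase-1 count, i.e.\ bounding the number of edges among big vertices by a linear function of $|B|$ with a $\chi$-dependent additive term, uniformly over all surfaces. If the planar-type bound $3|B| - 3\chi$ is awkward to justify for an induced subgraph, I would fall back on cruder estimates that suffice because $C(M)$ is only required to be a constant. One option is to bound the number of Phase-1 faces by $\sum_{a\in B}$ (faces at $a$ containing another big vertex), using that the link of $a$ is a cycle. Another is to use $|B|\le 6(n+|\chi|)/K$ together with $\binom{|B|}{2}$ when $n$ is small relative to $K$. This explains why the stated constant has the generous form $K^2 + 3K$.
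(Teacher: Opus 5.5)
Your proof has a genuine gap in Phase 1 when $\chi(M)=0$, that is, for the torus and the Klein bottle. There $K=18(1+|\chi|)=18$, not $\ge 36$. Your own estimates then give only $|B|<n/3$, hence $\#\{\text{Phase-1 faces}\}\le 2e(B)\le 6|B|+2c<2n+2c$, and this is not bounded by $n+K^2+3K$ once $n$ is large. For $|\chi|\ge 1$ your computation is fine, since $36(n+|\chi|)/K=2(n+|\chi|)/(1+|\chi|)\le n+|\chi|$.

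Neither fallback closes the gap. With a constant degree threshold, $|B|$ grows linearly in $n$, so $\binom{|B|}{2}$ is quadratic in $n$. Counting, at each $a\in B$, the faces with a second big vertex via the link cycle gives at most $2\deg_B(a)$ per vertex, i.e.\ $4e(B)$ in total, which is worse than $2e(B)$. Nor is $K^2+3K$ spare slack for this step. In the paper it is exactly $K(K-1)+4K$, and it comes from a different threshold, $d(v)>|V(T)|/3$. That threshold forces $|H(T)|\le 18(1-\chi/|V(T)|)\le K$, so at most $K(K-1)$ faces meet $H(T)$ twice. Faces meeting $H(T)$ once are then coloured family by family. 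Each conflicts with at most $|V(T)|-2$ faces through its high vertex, at most $4$ faces from each earlier family, and the bounded exceptional set.

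Your scheme is easily repaired, and it then becomes a genuinely different route that is simpler in Phase 2. There are two options.

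\emph{Raise the threshold.} Call $v$ big when $d(v)>2K$. Then $6|B|<(n-\chi)/(1+|\chi|)\le n+|\chi|$, so Phase 1 needs at most $n+7|\chi|+2c$ colours. Phase 2 needs at most $n+4K-3\le n+K^2+3K$.

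\emph{Keep the threshold $K$ and use the degree excess.} Write $f_j$ for the number of faces with exactly $j$ big vertices. The relations $f_1+2f_2+3f_3=\sum_{v\in B}d(v)\ge 19|B|$, $f_2+3f_3=2e(B)\le 6|B|-6\chi+2c$ and $f_0+f_1+f_2+f_3=2(n-\chi)$ together yield $f_2+f_3\le \tfrac{12}{13}n+10|\chi|+3c$.

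The Euler-type bound $e(B)\le 3|B|-3\chi$ for $|B|\ge 3$, which you flagged as delicate, is standard for simple graphs embedded in $M$ and is not the problem. With either repair, your constant threshold makes every face with at most one big vertex trivially greedy-colourable, with no ordering of families needed. The price is a global count for the faces with two big vertices. The paper's $|V(T)|/3$ threshold instead makes that count trivial, $2\binom{|H(T)|}{2}$, at the cost of the family-by-family argument.
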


\begin{proof}
 For $v \in V(T)$, we write $d(v)$ in place of the number of triangular faces incident with $v$. The link of $v$ (recalling \eqref{definelk}) is 
 necessarily a simplicial circle, and this follows from $T$ being a simplicial triangulation of a (connected) 
 closed 2-manifold. We thus find that $d(v)$ is 
 the number of neighbors of $v$ in $T^{(1)}$ (recalling \eqref{defineskeleton}), 
 i.e., so that 
\begin{equation}\label{crudedv} 
 d(v) \leq \left| V(T) \right| - 1. 
\end{equation} 

 Let $u$ and $v$ denote distinct vertices in $V(T)$. 
 If a face $\sigma \in F(T)$ contains each of $u$ and $v$, 
 then the closure property associated with simplicial complexes gives us that 
 $\{ u, v \} \in E(T)$. From the assumption that $T$ triangulates $M$, and since $M$ is a (connected) closed 2-manifold (and thus does 
 not have a boundary), 
 every edge is necessarily contained in precisely two faces, i.e., so that
 \begin{equation*}
 \left| \{ \sigma \in F(T) : \{ u, v \} \subseteq \sigma \} \right| = \begin{cases} 
 2, & \text{if $\{ u, v \} \in E(T)$;} \\ 
 0, & \text{otherwise. } 
 \end{cases} 
\end{equation*}
 Since each edge in $E(T)$ is incident with precisely two faces in $F(T)$, 
 the equality 
\begin{equation}\label{3FT2ET} 
 3 |F(T)| = 2 |E(T)| 
\end{equation}
 holds. Write $\chi = \chi(M)$. From \eqref{3FT2ET} together with the 
 formulation of the Euler--Poincar\'{e} formula in \eqref{EulerPoincare}, 
 we obtain that 
\begin{equation}\label{ET3VTchi} 
 \left| E(T) \right| = 3 \left( \left| V(T) \right| - \chi \right). 
\end{equation}
 By the handshaking lemma (in the usual graph-theoretic sense) 
 together with \eqref{3FT2ET} and \eqref{ET3VTchi}, we find that 
\begin{equation}\label{fromhandshake} 
 \sum_{v \in V(T)} d(v) = 3 \left| F(T) \right| = 6 \left( \left| V(T) \right| - \chi \right). 
\end{equation}
 Define 
\begin{equation}\label{defineHT} 
 H(T) := \left\{ v \in V(T) : d(v) > \frac{ \left| V(T) \right| }{3} \right\}. 
\end{equation}
 From \eqref{fromhandshake} together with the definition on display in \eqref{defineHT}, we find (being mindful for the case whereby
 $H(T) = \varnothing$) that 
\begin{equation}\label{boundhandshake} 
 \left| H(T) \right| \frac{ \left| V(T) \right| }{3} 
 \leq \sum_{v \in H(T)} d(v) \leq 6 \left( \left| V(T) \right| - \chi \right). 
\end{equation}
 In turn, the bounds in \eqref{boundhandshake} 
 give us that 
\begin{equation}\label{boundHT} 
 \left| H(T) \right| \leq 18 \left( 1 - \frac{\chi}{ \left| V(T) \right| } \right). 
\end{equation}
 Since $\left| V(T) \right| \geq 1$, we see that 
 $1 - \frac{\chi}{ \left| V(T) \right| } \leq 1 + \left| \chi \right|$. 
 So, by defining $K_{M}$ as in \eqref{defineKM}, 
 the relation \eqref{boundHT} gives us that 
\begin{equation}\label{HTleqKM} 
 \left| H(T) \right| \leq K_{M}.
\end{equation} 
 So, the number of vertices of degree greater than $\frac{\left| V(T) \right|}{3}$
 is bounded above by a constant depending only on $M$. 

 Define $C(M)$ as in \eqref{defineCM}. 
 Our goal, at this point, is to show that $\left| V(T) \right| + C(M)$
 colors suffice, i.e., in order for there to 
 be a cyclic coloration of $T$. 

 We write $\mathcal{E}$ in place of the set consisting of elements $f \in F(T)$ containing at least two vertices in $H(T)$. We then find that 
 there are at most $\binom{ \left| H(T) \right| }{2}$ 
 sets $\{ u, v \}$ of distinct vertices $u, v \in H(T)$
 such that $\{ u, v \}$ is contained in a face in $\mathcal{E}$, 
 and that an arbitrary 2-set $\{ u, v \}$ satisfying the given conditions
 is contained in at most two faces. Consequently, the relation 
\begin{equation}\label{double2sets} 
 \left| \mathcal{E} \right| \leq 2 \binom{ \left| H(T) \right| }{2} 
\end{equation}
 holds, so that \eqref{HTleqKM} and \eqref{double2sets} together give us that 
\begin{equation}\label{boundexceptional} 
 \left| \mathcal{E} \right| \leq K_{M} \left( K_{M} - 1 \right). 
\end{equation}
 We then assign $\left| \mathcal{E} \right|$
 distinct colors to the faces in $\mathcal{E}$. 

 Since we have colored the faces in $F(T)$ containing at least two vertices in $H(T)$, 
 this leads us to consider the faces in $F(T)$ containing exactly one vertex in $H(T)$.
 In this direction, for $x \in H(T)$, we define
\begin{equation}\label{definecalF}
 \mathcal{F}_{x} := \{ \sigma \in F(T) : \sigma \cap H(T) = \{ x \} \}. 
\end{equation}
 
 If $H(T) = \varnothing$, the following stage is vacuous; otherwise, we endow $H(T)$ with a fixed (but arbitrary) linear order relation 
 $\triangleleft$, letting the vertices of $H(T)$ be written as $x_{1}$, $x_{2}$, $\ldots$, $x_{|H(T)|}$
 and ordered so that 
\begin{equation}\label{displayorderH} 
 x_{1} \triangleleft x_{2} \triangleleft \ldots \triangleleft x_{|H(T)|}.
\end{equation}
 Our strategy, at this point, is to 
 color the faces in the families 
 $\mathcal{F}_{x_{1}}$, $\mathcal{F}_{x_{2}}$, $\ldots$, $\mathcal{F}_{x_{|H(T)|}}$
 according to the ordering in \eqref{displayorderH}, 
 so that all of the faces within $\mathcal{F}_{x_{i}}$ are colored in a fixed but arbitrary order
 for fixed $i \in \{ 1, 2, \ldots, |H(T)| \}$. 

 According to the above process, suppose that 
 the face 
\begin{equation}\label{definesigma} 
 \sigma = \{ x_{i}, a, b \} \in \mathcal{F}_{x_{i}}
\end{equation}
 is to be colored, for some index 
 $i \in \{ 1, 2, \ldots, |H(T)| \}$ 
 and some (distinct) vertices $a$ and $b$, with the understanding that all preceding faces have been colored.
 Observe that: By construction, the vertex $x_i$ is the unique vertex that is both in $\{ x_i, a, b \}$ and in $H(T)$. 
 Now, consider previously colored faces (prior to $\sigma$ being colored) 
  that are in the same family $\mathcal{F}_{x_{i}}$ as $\sigma$. 
 From the definition in \eqref{definecalF}, 
 we find that each face in $\mathcal{F}_{x_{i}}$ necessarily contains $x_{i}$. 
 So, the number of previously colored faces
 in $\mathcal{F}_{x_{i}}$
 is bounded above by 
\begin{equation}\label{boundprevious} 
 \left| \mathcal{F}_{x_{i}} \right| - 1 \leq d(x_i) - 1. 
\end{equation} 
 From \eqref{crudedv} and \eqref{boundprevious} together,  we then find that the number of previously colored faces in 
  $\mathcal{F}_{x_{i}}$ is bounded above by 
\begin{equation}\label{upperVTminus2}
 d(x_{i}) - 1 \leq \left| V(T) \right| - 2.
\end{equation}
 We also consider the possibility that previously colored faces may be in the exceptional set $\mathcal{E}$, recalling the upper bound on 
 display in \eqref{boundexceptional}. Moreover, we need to consider, as below, faces in previously colored families. 

 We remain consistent with our notation for $\sigma$, as defined in \eqref{definesigma}, with $i$ as in \eqref{definesigma}. Now, let
 $j < i$. Let $\tau \in \mathcal{F}_{x_{j}}$ be a face having a common vertex with $\sigma$. 
 Since $\tau \in \mathcal{F}_{x_j}$ we have $\tau \cap H(T) = \{ x_{j} \}$.
 In particular, we find that $x_i \not\in \tau$. 
 Hence, if $\tau$ intersects $\sigma = \{ x_i, a, b \}$, 
 then $\tau$ contains $a$ or $b$. 

 We then see that at most two faces in $F(T)$ contain both $a$ and $x_j$ and that at most two faces in $F(T)$ contain both $b$ and $x_j$. 
 Consequently, for each previously processed vertex $x_j$ in $H(T)$, at most four faces in $\mathcal{F}_{x_j}$ can have a common vertex 
 with $\sigma$. There are at most $|H(T)| - 1$ previously processed vertices in $H(T)$, 
 and \eqref{HTleqKM} then gives us that there are at most $K_M$ previously processed vertices in $H(T)$. 
 So, out of the families among $\mathcal{F}_{x_1}$, $\mathcal{F}_{x_2}$, $\ldots$, $\mathcal{F}_{x_{i-1}}$, 
 there are at most $4 K_M$ previously colored faces 
 sharing a common vertex with $\sigma$. 
 This, together with foregoing considerations (recalling \eqref{boundexceptional} and \eqref{upperVTminus2}), 
 allows us to conclude that the number of previously colored faces that cannot have the same color as $\sigma$
 is at most 
 $\left| V(T) \right| - 2 + K_{M} ( K_{M} - 1 ) + 4 K_M$. 
 From \eqref{defineCM}, this value is equal to 
 $\left| V(T) \right| - 2 + C(M)$. 

 So, according to the above procedure, at most $\left| V(T) \right| - 2 + C(M)$ colors are not permitted when $\sigma $ is being colored. 
 So, with a palette of $\left| V(T) \right| + C(M)$ colors, one may always color $\sigma$, without obstructions. Applying the greedy 
 process described above, 
 we may color every face containing exactly one vertex in $H(T)$. 
 
 It remains to consider elements in $F(T)$ containing no vertex in $H(T)$. Let these remaining faces be ordered in a fixed (but 
 arbitrary) way. For such a face $\sigma = \{ a, b, c \}$ currently being colored, 
 we find that: Since none of the vertices in $\{ a, b, c \}$ is in $H(T)$, 
 we have that each of $d(a)$, $d(b)$, and $d(c)$ is bounded above by $\frac{ \left| V(T) \right| }{3}$. 
 The number of faces other than $\sigma$ having a common vertex with $\sigma$ is at most 
 $\big( d(a) - 1 \big) + \big( d(b) - 1 \big) + \big( d(c) - 1 \big)$, 
 which, in turn, is at most $\left| V(T) \right| - 3$, from the given bounds on $d(a)$, $d(b)$, and $d(c)$. 
 So, regardless of the number of faces that have already been colored, 
 at most $\left| V(T) \right| - 3$ colors are forbidden while $\sigma$ is being colored. 
 Since this value is strictly less than $\left| V(T) \right| + C(M)$, there is at least one color available for $\sigma$. 
 
 So, after completing the above procedure, all of the faces have been colored, and, by our above construction, any two faces sharing a 
 common vertex are colored differently. So, our coloring is cyclic and uses at most $\left| V(T) \right| + C(M)$ colors, and hence the 
 upper bound in \eqref{equivalentconj}. 
\end{proof}

\section{Disproof of Chen and Lawrencenko's first conjecture}\label{secdisproof}
 Our simplicial complex that we apply to obtain a counterexample to the first Chen--Lawrencenko 
 conjecture \cite[Conjecture 1]{ChenLawrencenko1999}, as in the proof of Theorem \ref{theoremdisproof} below, is classified 
 as 323-A according to a classification system given 
 in Cervone's work on simplicial immersions of the {K}lein bottle \cite{Cervone1994}. 
 As below, a triangulation $T$ of (a closed, connected $2$-manifold) $M$
 is said to be \emph{vertex-minimal} if
 $\left| V(T) \right| = V_{\min}(M)$. 

\begin{theorem}\label{theoremdisproof}
 There exists a vertex-minimal triangulation $T$ of the Klein bottle such that $|V(T)| = 8$ and $\xi(T) = 9$, i.e., so that the first 
 Chen--Lawrencenko conjecture is false. 
\end{theorem}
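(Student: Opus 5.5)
We will prove the theorem by exhibiting an explicit simplicial complex, namely Cervone's 8-vertex, 16-face triangulation of the Klein bottle of type 323-A, and then computing its cyclic chromatic number. The argument has four parts.

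\emph{Listing and checking $T$.} We first write down the 16 facets of $T$ on the vertex set $\{0,1,\ldots,7\}$. We then check that $T$ triangulates the Klein bottle:
\begin{enumerate}[(i)]
 \item every edge lies in exactly two faces;
 \item every vertex link is a simplicial circle;
 \item the counts are $|V|=8$, $|E|=24$, $|F|=16$, so $\chi(T)=0$;
 \item $T$ is non-orientable, which we show by a short inconsistent propagation of orientations around a cycle of faces.
\end{enumerate}
These four facts identify $|T|$ as the Klein bottle. For vertex-minimality we cite Franklin's theorem that the Klein bottle admits no 7-vertex triangulation, so $V_{\min}=8$. The lower bound $\xi(T)\ge 8$ is then trivial if some vertex has degree $8$, but that is impossible since $d(v)\le 7$. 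So we will instead need the lower bound $\xi(T)\ge 9$ directly.

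\emph{Lower bound $\xi(T)\ge 9$.} This is the main obstacle. The key reformulation is that a cyclic coloration is a proper vertex coloring of the graph $G$ on $F(T)$ in which two faces are adjacent when they share a vertex. Since $|F|=16$, an $8$-coloring would need color classes of average size $2$.

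Each color class is a set of pairwise vertex-disjoint triangles, so it has size at most $\lfloor 8/3\rfloor=2$. Hence an $8$-coloring exists if and only if $F(T)$ partitions into $8$ pairs of vertex-disjoint faces. Equivalently, the complement graph $\overline{G}$, whose edges join disjoint face pairs, must have a perfect matching.

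So the plan is to compute, for each face $\{a,b,c\}$, the faces contained in the complementary $5$-set $V\setminus\{a,b,c\}$. We then show that $\overline{G}$ has no perfect matching. We would try, in order:
\begin{enumerate}[(i)]
 \item finding a face with no disjoint partner;
 \item failing that, a Tutte/Hall obstruction, such as a set $S$ of faces whose removal leaves more than $|S|$ odd components, or a set of $k$ faces whose disjoint partners number fewer than $k$;
 \item as a last resort, exhaustive case analysis on the partner of one fixed face, propagating forced pairings.
\end{enumerate}
The specific 323-A structure, with its degree sequence and its high-degree vertices forcing many faces through few vertices, should make Hall's condition fail.

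\emph{Upper bound $\xi(T)\le 9$.} We exhibit an explicit $9$-coloring of the $16$ faces, for instance seven disjoint pairs plus two singletons. We then check it by listing each color class and verifying disjointness.

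Together with the Klein-bottle verification and vertex-minimality, this gives $\xi(T)=9\neq 8=|V(T)|=V_{\min}$. This contradicts Conjecture 1, since the Klein bottle is not the projective plane.
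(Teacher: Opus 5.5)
Your outline follows the paper's proof step for step: the same 323-A complex, vertex-minimality via Franklin, at most two faces per color class, and an explicit $9$-coloring. As written, though, it is a plan rather than a proof. You never list the sixteen faces, and the lower bound $\xi(T)\ge 9$ — the step you yourself call the main obstacle — is left as a menu of strategies (``should make Hall's condition fail''), so it is not established.

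The missing fact is that your first option already succeeds. In the paper's labeling $F(T)=\{123,124,135,146,156,236,245,257,268,278,345,348,367,378,468,567\}$, the face $236$ meets every other face:
\begin{itemize}
\item $2,3,6$ are exactly the three vertices of degree $7$; the degree sequence is $7,7,7,6,6,5,5,5$.
\item Each of the edges $23,26,36$ lies in exactly one face other than $236$.
\item Inclusion--exclusion then gives $7+7+7-2-2-2+1=16$ faces meeting $236$, i.e.\ all of them.
\end{itemize}
So $236$ forms a singleton color class, and $16\le 1+2(k-1)$ forces $k\ge 9$. No Tutte/Hall argument or case analysis is needed.

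Three smaller points.
\begin{itemize}
\item \emph{Vertex-minimality.} Franklin's theorem only says that $K_7$ does not embed in the Klein bottle. You also need the count $|E|=3|V|\le\binom{|V|}{2}$, which comes from $\chi=0$ and each edge lying in two faces. This gives $|V|\ge 7$, and shows that a $7$-vertex triangulation would have $K_7$ as its $1$-skeleton.
\item \emph{Identifying $|T|$.} Your intrinsic check (edges in two faces, circular links, $\chi=0$, non-orientability, then the classification of surfaces) is a valid alternative to the paper's fundamental-square picture, and it works here. Orient $123$ as $(1,2,3)$ and propagate through $124,245$ and through $135,345$. This forces the edge $45$ to be traversed as $4\to 5$ in both $245$ and $345$, so $T$ is non-orientable.
\item \emph{Upper bound.} One explicit $9$-coloring uses the vertex-disjoint pairs $\{123,567\}$, $\{124,367\}$, $\{135,268\}$, $\{146,257\}$, $\{156,348\}$, $\{245,378\}$, $\{278,345\}$, together with the singletons $\{236\}$ and $\{468\}$.
\end{itemize}
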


\begin{proof}
 Let $T$ be the (unique) finite simplicial complex such that $V(T) = \{ 1, 2, \ldots, 8 \}$ and such that its set of facets is equal to its set 
 of faces, with 

 \ 

\noindent $F(T)$ $=$ $ \{ 123$, \ $124$, \ $135$, \ $146$, \ 
 $156$, \ $236$, \ $245$, \ $257$, \ $268$, \ $278$, \ $345$, 
 \ $348$, \ $367$, \ $378$, \ $468$, \ $ 567 \}$.

 \ 

 \noindent We write $D$ in place of the simplicial 2-disk illustrated with the triangulated copy of $[0, 1]^2$ shown in Figure 
 \ref{uncolored}, i.e., with the understanding that we identify the geometric realization of $D$ 
 with $[0, 1]^2$, i.e., up to homeomorphism, with $[0, 1]^2$ being endowed with its topology inherited from the Euclidean plane. 
 Repeated numerical labels on the boundary 
 denote distinct vertices of $D$ (with the same labels) 
 and are meant to illustrate the quotient to be applied below. 

\begin{figure}
\begin{center}
\includegraphics[scale=0.10]{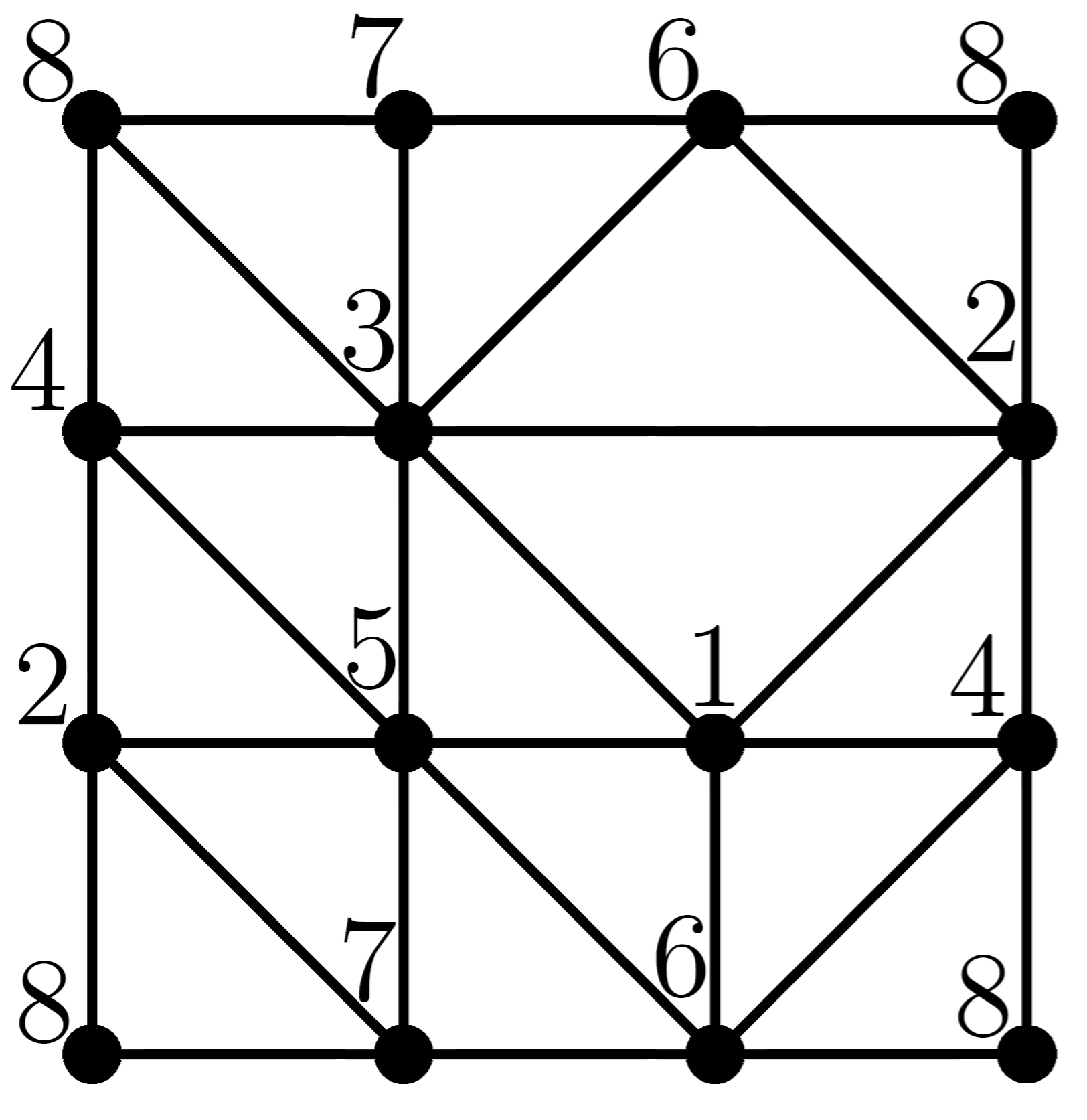}
\caption{\label{uncolored} A minimal triangulation of the Klein bottle. 
 This triangulation is equivalent to Cervone’s triangulation classified as 323-A \cite{Cervone1994}.}
\end{center}
\end{figure}

 Define the equivalence relation $\sim$ on $[0, 1]^2$ 
 so that $(x,0) \sim (x,1)$ and $(0, y) \sim (1, 1-y)$ for $x, y \in [0, 1]$. 
 Also, write $\leftrightarrowtriangle$ in place of the relation given by the adjacency of vertices in a simple graph. 
 With respect to $\sim$, the upper boundary path given by the sequence 
\begin{equation}\label{upperpath}
 8 \leftrightarrowtriangle 
 7 \leftrightarrowtriangle 6 \leftrightarrowtriangle 8
\end{equation}
 of vertices, from left to right, is then identified, in the same order specified in \eqref{upperpath}, with the lower boundary path consisting 
 of the four vertices labeled from left to right with $8$, $7$, $6$, and $8$. In contrast, the left-hand boundary path   
 $ 8 \leftrightarrowtriangle 4 \leftrightarrowtriangle 2 \leftrightarrowtriangle 8$,     from top to bottom, is then identified with the right  
  boundary in a reversed direction, i.e., from bottom to top. 
 These side-pairing maps are simplicial. 
 Also, we find that the quotient $[0, 1]^2/\sim$, endowed with its quotient topology, 
 is homeomorphic to the Klein bottle. 

 With respect to the quotient map indicated above, the $16$ triangular regions indicated in Figure \ref{uncolored}
 yield $16$ distinct $2$-simplices, and the set of vertex sets of these $2$-simplices is precisely $F(T)$. 
 Since no triangle has two vertices identified with respect to $\sim$, and since images of distinct triangles intersect 
 only via common faces, we find that $|T| \cong [0, 1]^2 / \sim$, 
 i.e., so that $T$ is a finite simplicial triangulation of the Klein bottle. 

 So, we have established that the triangulation constructed above and shown in Figure \ref{uncolored} is a triangulation of the Klein 
 bottle $K$. This gives us that $V_{\min}(K) \leq 8$. 
 
 Now, let $S$ be any finite abstract simplicial triangulation of the Klein bottle. Since each triangular face in $F(S)$ contains exactly three 
 edges, and since each edge of an arbitrary triangulation of a connected closed 2-manifold is necessarily contained in precisely two faces, 
 we may verify that the number of incident pairs $(\alpha, \sigma) \in E(S) \times F(S)$ satisfying $\alpha \subseteq \sigma$ gives us 
 the equality 
\begin{equation}\label{countpairs} 
 3 \left| F(S) \right| = 2 \left| E(S) \right|. 
\end{equation}
 From \eqref{countpairs}, together with the vanishing of $\chi(K)$, and together with the definition of the Euler--Poincar\'{e}
 characteristic shown in \eqref{EulerPoincare}, we find that 
\begin{equation}\label{ES3VS} 
 \left| E(S) \right| = 3 \left| V(S) \right|. 
\end{equation} 
 Since $S$ is simplicial, the 1-skeleton of $S$ forms a simple graph, so that 
 $ \left| E(S) \right| \leq \binom{ \left| V(S) \right| }{2}$, 
 and this, together with 
 \eqref{ES3VS}, gives us that 
\begin{equation}\label{binomialupper} 
 3 \left| V(S) \right| \leq \frac{ \left| V(S) \right| \left( \left| V(S) \right| - 1 \right) }{2}. 
\end{equation}
 From \eqref{binomialupper}, it follows that 
\begin{equation}\label{VSgeq7} 
 \left| V(S) \right| \geq 7. 
\end{equation} 
 If $\left| V(S) \right|$ were equal to $7$, then, from \eqref{ES3VS}, we would have that $\left| E(S) \right| = \binom{ 7 }{2}$, so that 
 $S^{(1)}$ would be equivalent to $K_7$. According to Franklin's six-color theorem for the Klein bottle \cite{Franklin1934}, each graph 
 embeddable in the Klein bottle is properly vertex-colorable with at most six colors. 
 In contrast, we find that $K_{7}$ has a chromatic number equal to $7$, 
 and hence is not embeddable in the Klein bottle. 
 This, along with \eqref{VSgeq7}, gives us that $\left| V(S) \right| > 7$. 

 So, we have shown that each simplicial triangulation of the Klein bottle has at least eight vertices. Since $T$, as defined above and as 
 illustrated in Figure \ref{uncolored}, has eight vertices, we find that $T$ is a vertex-minimal triangulation. 

 So, it remains to prove that $\xi(T) = 9$. 
 
 We may verify (being mindful of the boundary identifications indicated above) that the coloring illustrated in 
 Figure \ref{colored} is a cyclic coloration of $T$. 
 This gives us that 
\begin{equation}\label{xiTleq9}
 \xi(T) \leq 9. 
\end{equation} 

\begin{figure}
\begin{center}
\includegraphics[scale=0.10]{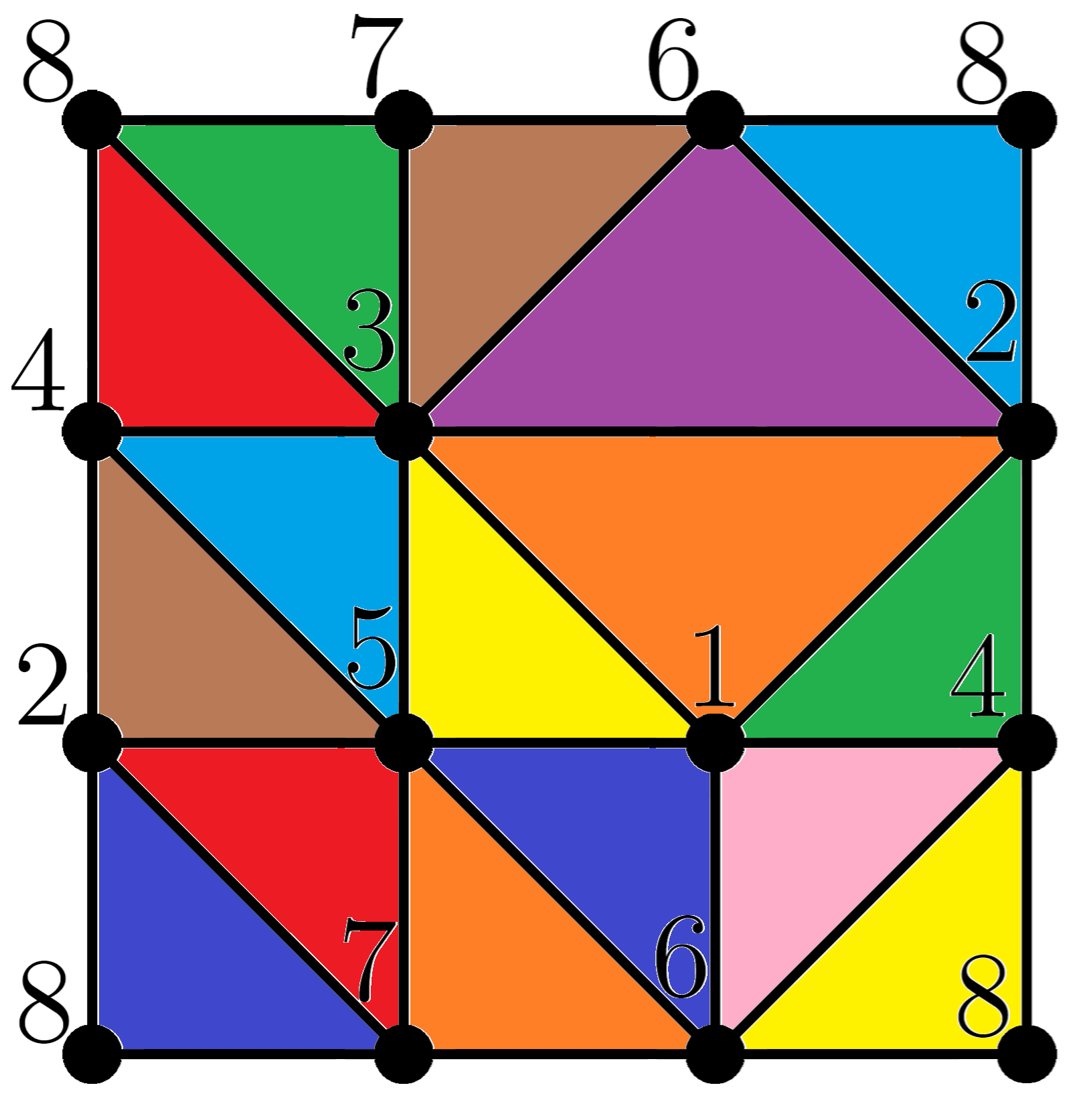}
\caption{\label{colored} A cyclic coloration of a triangulation of the Klein bottle (cf.\ \cite{Cervone1994}).}
\end{center}
\end{figure}

 Now, let 
\begin{equation}\label{displaykappa}
 \kappa\colon F(T) \to \{ 1, 2, \ldots, k \} 
\end{equation}
 be an arbitrary cyclic coloration of $T$, for some positive integer $k$. By definition of a cyclic coloration, if two unequal faces are given 
 the same color, then these faces cannot have a vertex in common. So, each color class associated with the mapping in 
 \eqref{displaykappa} consists of pairwise vertex-disjoint triangular faces. Consequently, since $T$ has $8$ vertices, no color class can 
 contain three faces. So, we find that 
\begin{equation}\label{cardfiber} 
 \left| \kappa^{-1}(i) \right| \leq 2 
\end{equation}
 for each color $i$ in the codomain in \eqref{displaykappa}. 

 We may verify, again with references to Figures \ref{uncolored} and \ref{colored}, that the face $236 \in F(T)$ has a vertex in common with 
 every other face of $T$. So, the color given to $236$ cannot be given to any other face, i.e., so that the color class containing $236$ is a 
 singleton set. This, together with \eqref{cardfiber}, 
 gives us that 
\begin{equation}\label{kappasum} 
 16 = \left| F(T) \right| = \sum_{i=1}^{k} \left| \kappa^{-1}(i) \right| \leq 1 + 2(k-1). 
\end{equation} 
 From \eqref{kappasum}, we find that $k \geq 9$. So, since we let $\kappa$ be an arbitrary cyclic coloration of $T$, we can conclude 
 that $\xi(T) \geq 9$, so that  this,  together with the reverse inequality in \eqref{xiTleq9}, gives   us the desired result. 
\end{proof}

\section{Conclusion}
 As noted by Chen and Lawrencenko \cite{ChenLawrencenko1999}, a proof of Conjecture 2 in their paper would imply that 	
\begin{equation}\label{limsup1} 
 \limsup_{\left| V(T) \right| \to \infty} \frac{\xi(T)}{\left| V(T) \right|} = 1, 
\end{equation}
 with the understanding that the limiting operation on the left of \eqref{limsup1} is taken over all triangulations $T$ of a connected closed 
 2-manifold $M$. Since we have proved the second Chen--Lawrencenko conjecture in the affirmative, 
 we have that \eqref{limsup1} indeed holds true. 

 Chen and Lawrencenko \cite{ChenLawrencenko1999} included two further conjectures in their paper, in addition to the conjectures
 proved/disproved above, 
 which were the main conjectures in their paper highlighted in their abstract.
 We leave it as an open problem to prove the remaining Chen--Lawrencenko conjectures. 

\subsection*{Acknowledgements}
 The author acknowledges extensive interactions with GPT-5.6 Pro during the exploratory and proof-development stages of this work. All 
 AI-generated suggestions were substantially revised, corrected, and independently verified by the author, who assumes full responsibility 
 for the mathematical content. The author's use of AI is fully in accordance with editorial standards on the responsible, ethical, and 
 transparent use of AI. 

\bibliographystyle{plain}
\bibliography{seprefe}

\end{document}